%%%%%%%%%%%%%%%%%%%%%%%%%%%%%%%%%%%%%%%%%%%%%%%%%%%%%%%%%%%%%%%%%%%%%%%%%%%%%%%%

\documentclass[letterpaper, 10 pt, conference]{ieeeconf}  % Comment this line out
                                                          % if you need a4paper
%\documentclass[a4paper, 10pt, conference]{ieeeconf}      % Use this line for a4
                                                          % paper

\IEEEoverridecommandlockouts                              % This command is only
                                                          % needed if you want to
                                                          % use the \thanks command
\overrideIEEEmargins
% See the \addtolength command later in the file to balance the column lengths
% on the last page of the document

%%%%%%%%%%%%%%%%%%%%%%%%%%%%%%%%%%%%%%%%%%%%
% Additional imports and macros 

\input{cdc_input}
\usepackage{mathtools}
\usepackage{amsmath}
\usepackage{cuted}

\DeclareMathOperator{\Id}{I}
\DeclareMathOperator*{\argmax}{arg\,max}

\makeatletter

\makeatother

\usepackage{hyperref}
% \usepackage[shortlabels]{enumitem}

% \usepackage{amsthm}
% \renewcommand{\qedsymbol}{$\blacksquare$}

%%%%%%%%%%%%%%%%%%%%%%%%%%%%%%%%%%%%%%%%%%%%

\title{\LARGE \bf
% Inferring System and Optimal Control Parameters from Partial Observations of Closed-Loop Systems
Inferring System and Optimal Control Parameters of \\ Closed-Loop Systems from Partial Observations
% Parameter Inference in Closed-Loop Systems from Partial Observations
% Inference from partial observations of an optimal closed-loop system
% System Identification and Inverse Optimal Control for Partially Observed Discrete-Time Stochastic Linear Quadratic Regulators
}

% Other titles:
% {Identification in Linear Control Systems}

% \author{
% \parbox{2 in}{%
%     \centering Victor Geadah*
%     \thanks{\textsuperscript{*}Correspondence: {\tt victor.geadah@princeton.edu}}\\
%         Program in Applied and Computational Mathematics\\
%         Princeton University\\
%         Princeton, NJ, U.S.A.
%         % {\tt\small h.kwakernaak@autsubmit.com}%
%         }
% \hspace*{ 0.5 in}
%         \parbox{3 in}{ \centering Pradeep Misra**
%         \thanks{**The footnote marks may be inserted manually}\\
%        Department of Electrical Engineering \\
%         Wright State University\\
%         Dayton, OH 45435, USA\\
%         {\tt\small pmisra@cs.wright.edu}}
% }

\author{Victor Geadah\textsuperscript{1,$\dagger$}, Juncal Arbelaiz\textsuperscript{2}, Harrison Ritz\textsuperscript{3}, Nathaniel D. Daw\textsuperscript{3}, \\ Jonathan D. Cohen\textsuperscript{3}, Jonathan W. Pillow\textsuperscript{1,3}% <-this % stops a space
\thanks{\textsuperscript{1}Program in Applied and Computational Mathematics, Princeton University, Princeton NJ, UA}%
\thanks{\textsuperscript{2}Department of Mechanical and Aerospace Engineering, Princeton University, Princeton NJ, US}
\thanks{\textsuperscript{3}Princeton Neuroscience Institute, Princeton University, Princeton NJ, USA}
\thanks{\textsuperscript{$\dagger$}Correspondence: {\tt victor.geadah@princeton.edu}}
% \thanks{V.~Geadah and J. W. Pillow are with the Program in Applied and Computational Mathematics at Princeton University. J. Arbelaiz is with the Department of Mechanical and Aerospace Engineering at Princeton University. H. Ritz, N. D. Daw, J. D. Cohen and J. W. Pillow are with the Princeton Neuroscience Institute, Princeton University, Princeton NJ, U.S.A.}
}

\begin{document}

\maketitle
\thispagestyle{empty}
\pagestyle{empty}

%%%%%%%%%%%%%%%%%%%%%%%%%%%%%%%%%%%%%%%%%%%%%%%%%%%%%%%%%%%%%%%%%%%%%%%%%%%%%%%%
\begin{abstract}
We consider the joint problem of system identification and inverse optimal control for discrete-time stochastic Linear Quadratic Regulators. 
%over finite and infinite time-horizons. 
We analyze finite and infinite time horizons in a partially observed setting, where the state is observed noisily.  To recover closed-loop system parameters, we develop inference methods based on probabilistic state-space model (SSM) techniques.
First, we show that the system parameters exhibit non-identifiability in the infinite-horizon from closed-loop measurements, and we provide exact and numerical methods to disentangle the parameters. Second, to improve parameter identifiability, we show that we can further enhance recovery by either (1) incorporating additional partial measurements of the control signals or (2) moving to the finite-horizon setting. 
% We then translate these methods to the finite-horizon setting, where we can recover exactly some system and control parameters under \juncal{further technical} assumptions.
We further illustrate the performance of our methodology through numerical examples. 

\end{abstract}

%%%%%%%%%%%%%%%%%%%%%%%%%%%%%%%%%%%%%%%%%%%%%%%%%%%%%%%%%%%%%%%%%%%%%%%%%%%%%%%%
\section{Introduction}

An important problem in the analysis of scientific data is accurately inferring the properties of dynamical systems based on partial observations. 
% This is a fundamental problem in neuroscience, where human neuroimaging (e.g., using methods such as EEG and MEG) typically requires that latent neural states be inferred from noninvasive recordings \cite{knosche2022eeg}. Scientists and practitioners aiming to characterize the properties of neural systems thus require accurate methods for probabilistic inference. \juncal{is 'accurate' the right word? also, 'for latent inference'? that's not exactly what we are doing here.}
This is particularly important in neuroscience, where the neural systems that support the highly complex computations of interest are both difficult to observe, which is done for instance through neuro-imaging \cite{knosche2022eeg} or electrophysiology, and are often unknown a-priori. 
Probabilistic state-space models provide a powerful language to model neuroscientific data \cite{Wu2006, Yu2006, schimel2022ilqrvae, geadah2024parsing}, however, the fact that this systems are naturally operating in closed-loop configurations is often disregarded in the inference.
% the impact of optimal control in these partially-observed systems remains largely unexplored.

A growing body of work uses control-theoretic tools to analyze the dynamics of neural systems (see e.g., \cite{ RevModPhys.90.031003, Kao2019-mn, ATHALYE2020145}). %\juncal{(add Doyle citation?)} 
For example, recent work used a standard formulation in optimal control, the \textit{Linear Quadratic Regulator} (LQR), to model neural dynamics in  motor cortex during movement preparation \cite{kao2021optimal}. While
posing  neuroscientific questions and analyzing related data from a control-theoretic lens is promising, 
%moving from control simulations to 
inferring the properties of controllers from closed-loop neural recordings 
poses particular statistical challenges \cite{Tsiamis2023}. %\juncal{citation needed -- related to the difficulty of closed-loop SysId -- maybe Tsiamis?}
% \juncal{maybe worth expanding a bit on which are the specific challenges \& how we address them?}

Work in inverse optimal control (IOC)\textemdash the problem of inferring the cost function of an optimal controller from recordings with known system dynamics\textemdash has historically focused on fully observed systems \cite{Kalman1964, Jameson1973, Zhang2019b}.  
% \juncal{'observable' or observed?} \victor{I think observed, and assumes $(A, C)$ observability conditions} 
Recent work has explored control over partially observed systems \cite{Zhang2019, Zhang2021}, but assumes the system parameters are known. 
% \juncal{same question as before.} 
Similarly, 
inverse optimal control has been proven useful in understanding
human sensorimotor control, but existing approaches have depended on full observations and/or a known plant \cite{Priess2015,Schultheis2021, Straub2022,  Karg2022}. 
% \juncal{but we also assume observability... is 'observability' really what's meant here?}
Refs.~\cite{pmlr-v28-golub13, 10.7554/eLife.10015}
 explored system parameter inference from a partially-observed neural system, but focused on system identification rather than inferring the parameters of the underlying control process.

% Unlike engineered or sensorimotor systems, for which the dynamics may be well-understood, the dynamics of neural systems are often unknown. Inverse optimal control over neural systems therefore requires not only inference from partial observations, but also joint inference of the properties of the plant (e.g., system dynamics) and the control system (e.g., cost functions). In many cases, the control inputs (i.e., originating from neural interactions) will be endogenous to the system and not known to the observer. However, we will also consider the case where there are noisy observations of control inputs, such as when a brain is controlling its skeletomotor system or a brain-computer inference.

Unlike engineered or known sensorimotor systems, for which 
accurate models of the dynamics might be available, the dynamics of neural systems are often unknown. Furthermore, they entail closed-loop processes. %\juncal{maybe the previous claim can be made more precise.} 
Leveraging control-theoretic perspectives on such un-observed processes recently gained traction in providing strong expressivity in state-space models \cite{schimel2022ilqrvae}. Similarly, \textit{model-free} \cite{Clarke2022}, or \textit{direct}, approaches to IOC involve no knowledge of the system and bypass it altogether, and offer discernible strengths for approximate control \cite{pmlr-v87-clavera18a, bertsekas2011dynamic}. In both cases, the study of the underlying joint parameters and identifiability is relatively unexplored. 

Motivated by this problem, we focus on identifying both system parameters (e.g., system dynamics) and optimal control parameters (e.g., cost functions) from partial observations, assuming the system is operating in closed-loop with an optimal controller corresponding to the stochastic LQR.
% \victor{Include more work on the joint problem of SysID and IOC here. Notably \cite{schimel2022ilqrvae} leverage control-theoretic perspective on latent dynamics with unobserved inputs but no study of the identifiability and more for expressivity purposes.}
Our approach consists of two steps. First, we leverage the analytic and tractable expectation-maximization (EM) algorithm for system identification of linear-Gaussian state space models  \cite{Ghahramani1996} to estimate a system's closed-loop dynamics. Second, we develop methods for inferring LQR state and action cost functions from these estimates of the closed-loop dynamics. 
% all EM cites: \cite{shumway1982approach, Ghahramani1996, roweis1999unifying}

Our contributions are three-fold. First in Section~\S\ref{s:inf}, we provide a characterization of the non-identifiability encountered in the infinite horizon setting, as a necessary and sufficient approach to identifiability. Next, we provide two different alternatives to improve parameter identifiability, namely i) by considering the finite-horizon setting in \S\ref{s:finite}; and ii) by using partial measurements of the control inputs in \S\ref{s:inf+control}. 
In all sections, we provide both theoretical treatments and either exact or numerical methods to disentangle the parameters.

\subsection{Notation}
% \juncal{include definition of identifiability here}\\
For a sequence of vectors $\{x_t\}_{t=1}^T$ where $x_t \in \R^n$, we abbreviate the notation as $x_{1:T}$. 
We assume that all random variables are defined on some probability space $(\Omega, \mathcal{B}, \mathbb{P})$, and for a general random variable $y:\Omega \to \R^n$ we abuse notation by denoting its instance $y \in \R^n$ and write its probability density function (p.d.f) as $p(y)$.
%
% Furthermore, let $x$ be distributed according to a distribution $P_\theta$ with probability density function $p_\theta$ and parameter $\theta$, then we also write $x \sim p_\theta$.
%
We denote by $\E_{p(x)}\left[f(x)\right] = \int f(x) p(x) dx$ the expected value of $f(x)$ for the random variable $x$.  
We denote by $\Id_n$ the $n \times n$ identity matrix. 
Finally, \textit{identifiability} refers to the following:

% \juncal{need to agree \& unify notation for vectors: bold vs regular font?} \victor{I vote regular font for vectors, it might be too heavy otherwise.}

\begin{definition}[Identifiability, {\cite[Def.~5.2]{LehmCase98}}]
    If $X$ is distributed according to $p_\theta$, then $\theta$ is said to be \textit{unidentifiable on the basis of X} if there exist $\theta_1 \neq \theta_2$ for which $p_{\theta_1} = p_{\theta_2}$.
\end{definition}

\section{Setting and Problem Formulation}

\subsection{Control Problem and Assumptions}

% \juncal{\tb{to do:} Victor, I think the problem formulation in this section is confusing. we are in a setting in which the control has access to full state measurements, but we do not. As the system (1)-(2) is written, for a control theorist, that would look like the control is using $y_t$. Please, reformulate this a bit (I would not present the dynamics and $y_t$ in the same set of equations) -- back in the day, I wrote a note about this in the Appendix of the other file that includes a schematic illustrating this that we could include here}
We consider the stochastic linear quadratic regulator (LQR) optimal control problem, which with a finite and known time-horizon $T > 0$ reads 
% \juncal{(we might need to introduce a time average in the cost function for it to be well-defined as $T \to \infty$)}
\begin{subequations}\label{eqs:LQR_problem}
\begin{multline}
    \min_{x_{1:T}, u_{0:T-1}} J_T(x_{1:T}, u_{0:T-1}) =  \\
        \frac{1}{T}\E\left[x_T^\top Q_T x_T + \sum_{t=0}^{T-1} \left(x_t^\top Q x_t + u_t^\top R u_t\right)\right] \label{eq:LQR_cost}
\end{multline}
subject to 
\begin{align}
    x_{t+1} &= A x_t + B u_t + \omega_t, \quad \omega_t \sim  \mathcal{N}(0,W) \label{eq:system:xt}
\end{align}
\end{subequations}
for $t \in \{0, \dots, T-1\}$, $x_0 \sim \mathcal{N}(0, W_0)$, for the state $x_t \in \R^n$ and control inputs $u_t \in \R^m$, where $A \in \R^{n \times n}$ and $B \in \R^{n \times m}$. The system is assumed to be driven by zero-mean Gaussian noise with covariance $W$. % We decided to have the dynamics noise covariance be the identity for identifiability reasons, which we discuss further below \victor{in section X}.
The LQR problem aims to minimize the cost function $J_T$, with parameters $Q, Q_T \in \R^{n \times n}$, positive semi-definite matrices, and $R \in \R^{m \times m}$, positive definite. The expectation is taken w.r.t. $\mathbb{P}\left(x_0, \omega_{0:T-1}\right)$, over the initial condition $x_{0}$ and the dynamics noise sequence $\omega_{0:T-1}$.
% \juncal{(check if later on we are assuming independence of $x_0$ and $\omega_{0:T-1}$ somewhere)}

For simplicity we assume there is not a cross-term between control input and state in the cost function. The system is assumed to have access to \textit{full} and \textit{noiseless} measurements of its own state for feedback control. (That is, there is no need to implement a Kalman filter for control). Furthermore, for well-posedeness and uniqueness of control gain solutions below, we assume that $(A,B)$ is controllable, $(A,C)$ is observable \cite{Kalman1960}. %\victor{and that $B$ has full (column) rank}. \juncal{what do you need this assumption for? Also, I would include assumptions in an appropriate environment if possible}
% \juncal{(I would introduce these in a proper assumption environment, and reference in propositions as needed)} %\victor{Unclear yet which assumptions to use for $A$, $B$, ...}

It is well-known \cite{anderson2007optimal} that the optimal control inputs $u^*$ for the problem \eqref{eqs:LQR_problem} are in \textit{proportional feedback} form
\begin{align}
    u^*_t &= -K_t x_t \nonumber \\
    &\coloneqq -(B^{\top} P_{t+1} B + R)^{-1}(B^{\top} P_{t+1} A) x_t \label{eq:K_t}
\end{align}
for the control gain $K_t \in \R^{m \times n}$, determined from the Riccati recursion term 
\begin{multline}
     P_t =  Q + A^{\top} P_{t+1} A \\ - A^{\top} P_{t+1} B (B^{\top} P_{t+1} B + R)^{-1} B^{\top} P_{t+1} A\label{eq:riccati}
\end{multline}
and $P_T = Q_T$. This yields the closed-loop system
\begin{align}\label{eq:closed_loop_finite}
    x_{t+1} = \underbrace{(A - B K_t)}_{F_t} x_t + \omega_t
\end{align}
with time varying closed-loop linear dynamics $F_t \in \R^{n \times n}$. We depict the block diagram for the closed-loop plant in Fig.~\ref{fig:system_schematics}. Furthermore, we will also be considering the infinite horizon setting, where we take $T \to \infty$ in \eqref{eq:LQR_cost}, removing $Q_T$. In this case, the optimal control signal $u^*_t$ is a linear feedback of the state as well, 
 but with a \textit{static} gain 
\begin{align}\label{eq:K_ss}
    u^*_t &= -K_{ss} x_t = -(B^{\top} P_{ss} B + R)^{-1}(B^{\top} P_{ss} A) x_t
\end{align}
for the control gain $K_{ss}$, where $P_{ss}$ satisfies the discrete algebraic Riccati equation (DARE)
\begin{multline}\label{eq:DARE}
     P_{ss} =  Q + A^{\top} P_{ss} A \\ - A^{\top} P_{ss} B (B^{\top} P_{ss} B + R)^{-1} B^{\top} P_{ss} A.
\end{multline}
This now yields the closed-loop system
\begin{align}\label{eq:closed_loop_infinite}
    x_{t+1} = \underbrace{(A - B K_{ss})}_{F_{ss}} x_t + \omega_t
\end{align}
with time-invariant linear dynamics $F_{ss} \in \R^{n \times n}$.

\begin{figure}[t]
    \centering
    \includegraphics[width=\columnwidth]{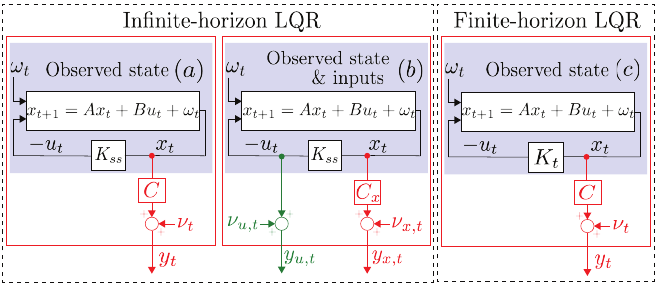}
    \caption{Models of the optimal closed-loop dynamics and observations in the different scenarios analyzed in this work. The infinite-horizon LQR is displayed in (a)-(b). These panels differ in the observations available for inference to an external observer: in (a) the observer only has access to partial and noisy measurements of the state, while in (b) noisy measurements of the control input are also accessible. (c) Finite-horizon LQR and observation model.}
    \label{fig:system_schematics}
\end{figure}

Importantly, we consider the setting in which an external observer has access only to partial and noisy measurements $y_t \in \R^{d}$ of the state $x_t$ for inference. In particular, we consider linear Gaussian observations
\begin{align}
     y_t &= C x_t + \nu_t, \quad \nu_t \sim \mathcal{N}(0,V) \label{eq:system:yt}
\end{align}
% \juncal{(\textbf{comment on notation:} could we use v for measurement noise? That is what I use in the EM part)}
for $t \in \{0, \dots, T\}$, where $C \in \R^{d \times n}$ and for the covariance matrix $V$. 
% \juncal{check  if downstream for $t = 0$ this is consistent with the EM steps -- i.e., are we assuming noisy $x_0$ observation of $x_0$ known?} 
The system \eqref{eq:system:xt}-\eqref{eq:system:yt} is sometimes referred to as an \emph{input-driven linear Gaussian state-space model} (LG-SSM \cite{murphy2012machine} (see non-input driven definition in Appendix~\ref{app:model_equivalence}), also referred to as \emph{Linear Dynamical System} (LDS) \cite{roweis1999unifying}, where specifically the control input sequence $u_{0:T-1}$ and corresponding $x_{1:T}$ trajectory are optimal under the stochastic LQR framework.

Having introduced the problem setting, we end with some notation. Let our set of parameters $\Theta$ be decomposed into $\Theta_s$ the set of system parameters $\theta_s = \{A,B,Q,R\} \in \Theta_s$, $\Theta_o$ the set of output parameters $\theta_o = \{C\} \in \Theta_o$, and $\Theta_n$ the set of noise parameters $\theta_n = \{W_0,W,V\} \in \Theta_n$. We have $\Theta = \Theta_s + \Theta_o + \Theta_n$. Furthermore, we will denote by $\mathcal{F}:\Theta_s \to \R^{n \times n}$ (resp. $\mathcal{F}_t$) the function that takes our system parameters $\theta_s$ and returns the closed-loop dynamics $\mathcal{F}(\theta_s) = F_{ss}$ in \eqref{eq:closed_loop_infinite} (resp. $\mathcal{F}_t(\theta_s) = F_t$ in \eqref{eq:closed_loop_finite}). 

\subsection{Problem Formulation}

An external observer has access only to partial observations $y_{0:T}$ of a system under control operating in closed-loop. At a high level, the question we seek to answer in this work is: strictly from observations $y$, what can be inferred about the system parameters $\{A,B,C,W,W_0,V\}$ and the control parameters $\{Q, R\}$? We will articulate most of our results on the specific interaction between $\{A, B, Q, R\}$. Non-identifiability is expected, thus we further ask: what are additional measurements or assumptions that we can use to reach identifiability? 

The basis for identifiability and parameter inference is the \textit{marginal likelihood}, 
\begin{align}\label{eq:marginal_lik}
    p_\theta(\mathbf{y}) =  \prod_{i=1}^N p_\theta\left(y^{(i)}_{0:T}\right),
\end{align}
for a dataset $\mathbf{y} = \lbrace y^{(i)}_{0:T} \rbrace_{i = 1}^N$. %$p_\theta(\mathbf{y})$, 
We perform parameter inference under the closed-loop statistical model $p_\theta$ presented in \eqref{eq:closed_loop_infinite}-\eqref{eq:system:yt} by seeking $\hat\theta \in \Theta$ maximizing the marginal likelihood.
% \begin{equation}
%     \hat\theta = \argmax_{\theta \in \Theta} p_\theta(\mathbf{y}) = \argmax_{\theta \in \Theta} \prod_{i=1}^N  p_\theta\left(y^{(i)}_{0:T}\right).
% \end{equation}

To shed light into the non-identifiability, we consider the first case of the infinite horizon setting. We plot in Fig.~\ref{fig:iden_infinite_motivation}\textbf{A} what such parameter inference approach can yield from observations in this case. We can infer the closed-loop parameters $F_{ss}$, but the parameters $\{A, B, Q, R\}$ are not identifiable. 
% This can be explained by the fact that the model $p_\theta$ is \textit{equivalent} to an LG-SSM with no inputs. 
This can be explained from the latent dynamics probabilities, which satisfy
\begin{align}
    p_\theta(x_{t+1} \mid x_t, u^*_t) &= p(x_{t+1} \mid x_t, \theta_s, \theta_n) \nonumber \\
    &= p\left(x_{t+1} \mid x_t, \mathcal{F}(\theta_s), \theta_n\right) \label{eq:dynamics_prob:lds_equiv}
\end{align}
% \juncal{question -- what is meant by $u_t^*$ above? Has the star notation been introduced?}\victor{Yes we did in (2), now also in (5)}
and are \textit{equivalent} to the LG-SSM latent dynamics. In 
App.~\hyperref[app:model_equivalence]{A}
(Prop.~\ref{prop:appendix:eq_LDS_LQR}), we formalize that under the assumption that the true model is indeed $p_\theta$, then parameter inference in the general statistical model defined through \eqref{eq:closed_loop_infinite}-\eqref{eq:system:yt} is {equivalent} to parameter inference in an LG-SSM.  Hence performing parameter inference can be reduced to, and is in fact equivalent to, the two following steps:
\begin{enumerate}
    \item Infer the closed-loop parameters $F_{ss}$ from partial observations $\mathbf{y}$; %We discuss further how do do so in \victor{next section X};
    \item Investigate the interaction of the system parameters $\theta_s$ in the closed-loop dynamics $F_{ss}$.
\end{enumerate}
 
In the next section \S\ref{ss:setting_EM} we address the first step and discuss how in practice we maximize the marginal likelihood in \eqref{eq:marginal_lik} and infer the closed-loop system parameters. The second step will be the topic of Sections \S\ref{s:inf} and \S\ref{s:finite} for respectively the infinite- and finite-horizon settings. Finally, in Section~\S\ref{s:inf+control} we explore how the introduction of additional control measurements can help relax this identifiability constraint. See Fig.~\ref{fig:iden_infinite_motivation}\textbf{B} for a schematic of our approach.

\begin{figure}
    \centering
    \includegraphics[width=\columnwidth]{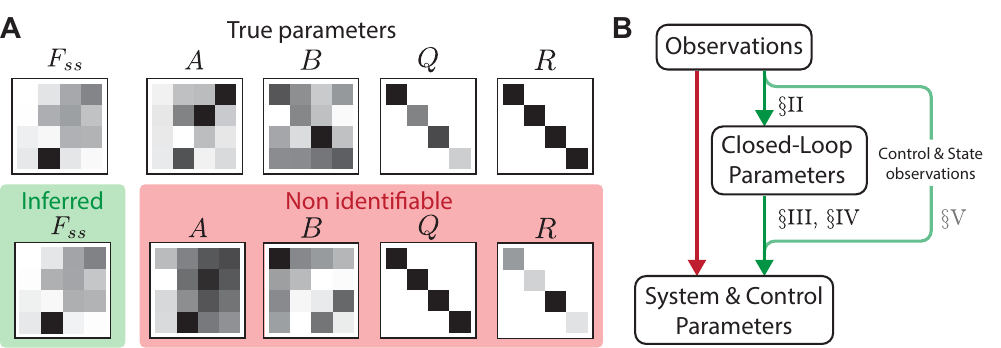}
    \caption{(\textbf{A}) Recovery from observations of the system $\{A,B\}$ and cost function $\{Q,R\}$ parameters in the infinite horizon setting is limited to identification of the closed-loop dynamics $F_{ss}$. We plot the inferred $F_{ss}$ using EM, as well as a combination $\{A,B,Q,R\}$ that yields this same $F_{ss}$.
    (\textbf{B}) Parameter inference can be reduced to closed-loop identification with probabilistic methods, followed by an investigation of the interaction between the different parameters in setting the closed-loop dynamics.}
    \label{fig:iden_infinite_motivation}
\end{figure}

\subsection{Closed-Loop System Recovery}\label{ss:setting_EM}
% \juncal{ highlighting that A,B,Q,R interact solely through the closed-loop dynamics. Defer details to the appendix. Think whether to present the case of control measurements available as well and how that helps identifiability.}\\

% \victor{In this section, we describe some of the non-identifiability in fitting an LDS to the data ($A$, $C$ are non-identifiable), and provide empirical evidence towards our ability to infer the closed-loop parameters from data. We will probably have to fix $C= \Id$ to alleviate some of the non-identifiability.}

% To maximize the likelihood value of the parameters of a model with
% \textit{latent} variables

A general difficulty in fitting parameters to data in state space models is that we must marginalize over latent sequences $x_{0:T}$
\begin{align*}
    p_\theta(y_{0:T}) &= \int p_\theta(y_{0:T} \mid x_{0:T})p_\theta(x_{0:T}) d x_{0:T} \\
    &= \E_{p_\theta(x_{0:T})}\left[p_\theta(y_{0:T} \mid x_{0:T})\right]
\end{align*}
to evaluate the marginal likelihood. For general state space models, this integration cannot be done in closed-form. Furthermore, optimizing this expression requires optimizing parameters of the support distribution of this integral. A popular approach that circumvents the latter consists of using the \emph{expectation-maximization} (EM) algorithm \cite{Shumway1982ANAT, Dempster1977, Ghahramani1996} to maximize \eqref{eq:marginal_lik}. The EM algorithm is centered around the following lower bound
\begin{align}
    \ln p_\theta(\mathbf{y}) \geq \mathcal{Q}\big(\theta \mid \theta'\big) \coloneqq \E_{p(x_{0:T} \mid \mathbf{y}, \theta')} \left[\ln p(x_{0:T}, \mathbf{y} \mid \theta) \right]
    \label{eq:Q:def}
\end{align}
to the marginal \textit{log}-likelihood, for any $\theta, \theta' \in \Theta$. The algorithm can then be summarized succinctly as recursively maximizing 
\begin{align}
    \theta^{(j+1)} \gets \argmax_{\theta} \mathcal{Q}\big(\theta \mid \theta^{(j)}\big)
\end{align}
over iterates $j = 1, 2, \dots$ until convergence. This algorithm has the convenient property that it guarantees monotonic increase in the lower bound $\mathcal{Q}$, but is generally highly dependent on initialization \cite{holmes2013derivation}. We use the EM-algorithm to maximize the likelihood in \eqref{eq:marginal_lik}. Further details for our problem are provided in Appendix \hyperref[app:EM_details]{B}. 
In Fig.~\ref{fig:recovery_comparison}\textbf{B}, we provide estimation errors for the recovery procedures as a function of the number of EM iterations, attesting to its convergence.

In linear Gaussian state-space models, %\harrison{switching between LG-SSM and LDS -- think we should pick one}
the marginal likelihood in \eqref{eq:marginal_lik} can in fact be evaluated in closed-form using Kalman filtering \cite[See \S18.3.1.3]{murphy2012machine}. Nevertheless, the EM-algorithm for the LG-SSM has closed-form updates for the parameters $\theta$ and remains a standard approach for its efficiency and flexibility \cite{holmes2013derivation}. Importantly for us, we will leverage these closed-form updates in \S\ref{s:inf+control} to showcase how the inclusion of control measurements can help us further in the identification of the parameters $\theta$. 
% with closed-form updates, or by direct numerical optimization of the marginal likelihood, which can be computed in closed-form. 

Regardless of the parameter inference methods, since the latent variables $x_t$ are not observed our model is only identifiable up to any invertible transformation of the latent space \cite{Tsiamis2023, Kalman1964}. %\juncal{worth adding a citation for this claim? I think Tsiamis' review should do}. 
Indeed, if we define a new latent state as $\hat x_t = H x_t$ for $H \in \R^{n \times n}$ invertible, then the closed-loop dynamics can be redefined as $\hat F_{ss} = H F_{ss} H^{-1}$, with dynamics covariance $H W H^\top$.
% \victor{Unclear for now if the non-identifiability is problematic for this specific paper. We make sure here to raise it, but perhaps there is no need to impose any constraints on $C$ or $W$ to alleviate this non-identifiability.}
%
The topic of invariant metrics under non-identifiability of system and control parameters dates back to original work by Kalman \cite{Kalman1964, Kalman1963} and is an active area of research.
Here, we study the relationship of the parameters $\theta_s = \{A,B,Q,R\}$ in setting $F_{ss}$. This helps in practice to provide estimates for $\hat\theta_s = \{\hat A, \hat B, \hat Q, \hat R\}$ from an inferred $\hat F_{ss}$. How identifiability considerations impact the estimates $\hat \theta$ is out of the scope of this paper. Thus, one must exercise caution when comparing parameters \textit{estimated} (e.g. $\hat A$) from this procedure with \textit{true} (e.g. $A$), generative, parameters. %In that setting we impose identifiability to alleviate mismatch\textemdash to this end, we propose to use $C=\Id$ when such comparisons are needed (as is the case in Fig.~\ref{fig:iden_infinite_motivation}).  

\section{Identifiability in Infinite Horizon}\label{s:inf}

% \begin{figure}
%     \centering
%     \includegraphics[width=\columnwidth]{figs/FIG2_1.pdf}
%     \caption{(\textbf{A}) Recovery from observations of the system $\{A,B\}$ and cost function $\{Q,R\}$ parameters in the infinite horizon setting is limited to identification of the closed-loop dynamics $F_{ss}$. We plot the inferred $F_{ss}$ using EM, as well as a combination $\{A,B,Q,R\}$ that yields this same $F_{ss}$.
%     (\textbf{B}) Parameter inference can be reduced to closed-loop identification with probabilistic methods, followed by an investigation of the interaction between the different parameters in setting the closed-loop dynamics.}
%     \label{fig:iden_infinite_motivation}
% \end{figure}

 The preceding section highlighted how our higher level goal amounts to disentangling the parameters $\{A, B, Q, R\}$ from measurements of the closed-loop system. The key idea is that by having direct measurements of the closed-loop, we will be able to disentangle our control gain equation \eqref{eq:K_t} (resp. \eqref{eq:K_ss}) from our Ricatti term \eqref{eq:riccati} (resp. \eqref{eq:DARE}), yielding a system of two equations. 

Let us first consider the infinite time-horizon setting, where we now presume to have access to a measurement of $F_{ss}$. Strictly from our control gain definition, we have
\begin{align*}
    F_{ss} &= A - B K_{ss} \\
    &= \left(\Id - B(R + B^{\top} P_{ss} B)^{-1} B^{\top} P_{ss} \right) A
\end{align*}
and in particular 
\begin{align}
    A &=  \left(\Id_{n} - B(R + B^{\top} P_{ss} B)^{-1} B^{\top} P_{ss} \right)^{-1} F_{ss} \nonumber \\
    &= \left(\left(\Id_{n} + B R^{-1} B^{\top}  P_{ss}\right)^{-1} \right)^{-1} F_{ss} \nonumber \\
    &= \left(\Id_n + B R^{-1} B^{\top} P_{ss} \right) F_{ss}\label{eq:lin_P:derivation}
\end{align}
where we used the Woodbury matrix identity to perform the inverse. Next, shifting to the DARE in \eqref{eq:DARE}, we note 
\begin{align*}
     P_{ss} &= Q  + A^{\top} P_{ss} A -A^{\top} P_{ss} B \left(R + B^{\top} P_{ss} B\right)^{-1} B^{\top} P_{ss} A \\
    &= Q  + A^{\top} P_{ss} A -A^{\top} P_{ss} B K_{ss} \\
    &= Q + A^\top P_{ss} (A -  B K_{ss})
\end{align*}
such that
\begin{align}
    P_{ss} &= Q + A^\top P_{ss} F_{ss} \label{eq:sylvester_P:derivation}
\end{align}

These two simple derivations together show that the interaction between the different parameters of interest can be reduced to considering a new system of equations, which we make explicit in the following proposition: 

\begin{proposition}\label{prop:param_recovery_sylvester_P}
    Consider the infinite horizon (stochastic) LQR problem with system parameters $\{A, B\}$ and control parameters $\{Q, R\}$. Let $(A,B)$ be stabilizable, and let $F_{ss}$ be the closed-loop dynamics matrix.  
    % \juncal{I do not like the word 'dynamics' there; F is just the closed-loop matrix. Maybe I would say 'let F be the closed-loop as defined in XX'}\victor{"Dynamics" is I think helpful as a term to bridge with probabilistic LDS modeling. I changed to "dynamics matrix". Is that good?}
    Then, the unique solution $P$ to the Sylvester equation
    \begin{subequations}\label{eqs:prop:param_recovery_sylvester_P}
    \begin{align}\label{eq:prop_recov:sylvester_P}
        P - A^\top P F_{ss} &= Q
    \end{align}
    satisfies the matrix equation
    \begin{align}\label{eq:prop_recov:lin_P}
         B R^{-1} B^\top  P F_{ss} = A - F_{ss}
    \end{align}
    \end{subequations}
\end{proposition}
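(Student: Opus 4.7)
The plan is to piggy-back on the two algebraic reductions sketched just before the statement and then handle uniqueness separately. Both reductions start from the stabilising positive semi-definite DARE solution $P_{ss}$: equation \eqref{eq:sylvester_P:derivation} shows that $P_{ss}$ satisfies the Sylvester-type identity \eqref{eq:prop_recov:sylvester_P}, while equation \eqref{eq:lin_P:derivation} shows that $P_{ss}$ also satisfies the linear relation \eqref{eq:prop_recov:lin_P}. Hence $P_{ss}$ is already one solution that matches both parts of the proposition. It then remains to argue that \eqref{eq:prop_recov:sylvester_P} has a \emph{unique} solution $P \in \R^{n \times n}$, so that any $P$ satisfying \eqref{eq:prop_recov:sylvester_P} must coincide with $P_{ss}$ and therefore inherit \eqref{eq:prop_recov:lin_P}.

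Before that, I would first turn the two reductions into rigorous identities. For \eqref{eq:prop_recov:sylvester_P}, I substitute $K_{ss} = (R + B^\top P_{ss} B)^{-1} B^\top P_{ss} A$ into the DARE \eqref{eq:DARE}, recognise the quadratic Riccati correction as $A^\top P_{ss} B K_{ss}$, collect $A^\top P_{ss} A - A^\top P_{ss} B K_{ss} = A^\top P_{ss}(A - B K_{ss}) = A^\top P_{ss} F_{ss}$, and rearrange. For \eqref{eq:prop_recov:lin_P}, I factor $F_{ss} = (I_n - B(R + B^\top P_{ss} B)^{-1} B^\top P_{ss})\,A$ from the definition of $K_{ss}$, invert the bracket via the Woodbury identity to get $(I_n + B R^{-1} B^\top P_{ss})$, and rearrange $A = (I_n + B R^{-1} B^\top P_{ss}) F_{ss}$ to match \eqref{eq:prop_recov:lin_P}. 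Both steps are purely algebraic and should go through cleanly.

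The main obstacle is the uniqueness step. Vectorising \eqref{eq:prop_recov:sylvester_P} yields $(I_{n^2} - F_{ss}^\top \otimes A^\top)\,\mathrm{vec}(P) = \mathrm{vec}(Q)$, so uniqueness is equivalent to the spectral non-resonance condition $\lambda_i(A)\,\lambda_j(F_{ss}) \neq 1$ for every pair of eigenvalues. Under the standard LQR well-posedness hypotheses — stabilisability of $(A,B)$ together with the detectability condition on $Q$ implicit in the existence of a stabilising $P_{ss}$ — the closed-loop matrix $F_{ss}$ is Schur stable, so $|\lambda_j(F_{ss})| < 1$. To rule out resonance with potentially unstable eigenvalues of $A$, I would either invoke the mild spectral-radius bound $\rho(A)\,\rho(F_{ss}) < 1$ (which is automatic when $A$ is itself Schur stable) or argue generically, noting that the pathological configurations form a Lebesgue-measure-zero subvariety of parameter space. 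Either way $I_{n^2} - F_{ss}^\top \otimes A^\top$ is invertible, $P = P_{ss}$ is the unique solution of \eqref{eq:prop_recov:sylvester_P}, and the conclusion follows from the reductions above.
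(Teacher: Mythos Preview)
Your approach matches the paper's exactly: its proof simply cites the two derivations \eqref{eq:lin_P:derivation}--\eqref{eq:sylvester_P:derivation} and asserts that stabilizability of $(A,B)$ makes $P=P_{ss}$ the unique solution of \eqref{eq:prop_recov:sylvester_P}. Your treatment of uniqueness is in fact more careful than the paper's --- the paper does not justify the spectral non-resonance condition at all, and only remarks afterward (as a \emph{consequence} of the claimed uniqueness) that $-A$ and $F_{ss}^{-1}$ share no eigenvalue --- so the caveat you flag about possibly unstable eigenvalues of $A$ is a gap the paper also leaves open, not a defect peculiar to your proposal.
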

\begin{proof} $(A,B)$ stabilizable ensures that $P=P_{ss}$ in \eqref{eq:prop_recov:sylvester_P} is unique, as derived from the DARE. The proof then simply follows from our derivations in \eqref{eq:lin_P:derivation}-\eqref{eq:sylvester_P:derivation}. 
\end{proof}

An interesting implication of the stabilizability of $(A,B)$, is that since $P$ in \eqref{eq:prop_recov:sylvester_P} is unique, this in turn implies that $F^{-1}$ and $-A^\top$ (equivalently $-A$) do not share an eigenvalue. 

In Prop.~\ref{prop:param_recovery_sylvester_P}, $F_{ss}$ is, \textit{a-priori}, the only known parameter. %\juncal{better to say 'identified' instead of 'known'?} 
As such, to make any conclusions about one or a combination of parameters without neither resorting to $P_{ss}$ nor constraints on our parameters, we need to assume the others to be known.

\paragraph{Solving for $A$} First and foremost, we may wish to solve for $A$ in \eqref{eqs:prop:param_recovery_sylvester_P}. This is slightly more challenging, as the system exhibits second-order tendencies in $A$.  To see this, consider the simplification $B = R = \Id$. In that case, we can easily solve for $P$ in \eqref{eq:prop_recov:lin_P} and substitute it in \eqref{eq:prop_recov:sylvester_P}, and obtain that $A$ satisfies the system \eqref{eqs:prop:param_recovery_sylvester_P} if and only if 
\begin{align}\label{eq:AQF_infinite}
    Q =  \left(A - F_{ss}\right) F_{ss}^{-1} - A^\top \left(A - F_{ss}\right) 
\end{align}

This is a second-order matrix equation in $A$, combining of a second-order self-adjoint term with a first-order term of Sylvester-transpose form. This combination is unfortunately not (readily) amenable to common second-order matrix equation techniques \cite{Crone1981}. Instead, we elect to use an iterative approach that draws from iterative solutions to Ricatti-type second-order equations \cite{Kleinman:1968:iterative_riccati}. 

\begin{proposition}\label{prop:iterative}
    Let $\{B, R, Q\}$ be given, as well as $F_{ss}$. Let $P_j$, $j = 1, 2, \dots$, be the (unique) positive definite solution to the algebraic Sylvester equation 
    \begin{subequations}
    \begin{align}
        P_j - A_j^\top P_j F_{ss} &= Q \label{eq:prop:A_recovery:sylvester}
    \end{align}
    where, recursively,
    \begin{align}\label{eq:prop:A_recovery:lin}
        A_j &= \left(\Id + BR^{-1} B^\top P_{j-1}\right) F_{ss}
    \end{align}
    where $A_0$ is chosen such that it does not share any eigenvalue with $-F_{ss}^{-1}$. Then 
    \begin{enumerate}
        \item $P_{ss}  \leqq P_{j+1} \leqq P_j \leqq \dots$, $j=1,2,\dots$
        \item $\lim_{j \to \infty} A_j = A$
    \end{enumerate}
    \end{subequations}
\end{proposition}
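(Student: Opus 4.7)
The plan is to run a strong-induction argument on $j$ in the spirit of Kleinman's classical convergence analysis of the Riccati recursion \cite{Kleinman:1968:iterative_riccati}. At each step I would carry three invariants: $(a)$ the Sylvester equation \eqref{eq:prop:A_recovery:sylvester} is well-posed, i.e., the spectra of $A_j^\top$ and $F_{ss}^{-1}$ are disjoint; $(b)$ $P_j$ is symmetric and positive definite; and $(c)$ the Loewner chain $P_{ss} \leqq P_{j+1} \leqq P_j$ holds. The base case is handled by the hypothesis on $A_0$ for $(a)$, and by choosing $A_0$ in a neighborhood of the fixed point $A$ where the Sylvester solution operator is continuous and keeps its output dominated by (and above) $P_{ss}$ in the Loewner order, securing $(b)$--$(c)$ at $j=0$.

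The crux, and the place I expect the real obstacle, is the inductive step. I would first rewrite \eqref{eq:prop:A_recovery:sylvester} in Stein form by substituting $A_j^\top = F_{ss}^\top(\Id + P_{j-1} BR^{-1}B^\top)$ to obtain
\begin{align*}
P_j - F_{ss}^\top P_j F_{ss} = Q + F_{ss}^\top P_{j-1} BR^{-1} B^\top P_j F_{ss}.
\end{align*}
Since $F_{ss}$ is Schur-stable (this is the value the limiting LQR produces), the Stein operator $X \mapsto X - F_{ss}^\top X F_{ss}$ is invertible and PSD-monotone on symmetric matrices, which transmits positive definiteness to $P_j$ once the right-hand side is recognized as symmetric PSD; this is the delicate point, requiring a symmetrization of $P_{j-1} BR^{-1}B^\top P_j$ through the common symmetric factor $BR^{-1}B^\top$ and a careful argument that the non-symmetric residual either vanishes by the induction hypothesis or is absorbed by the Stein operator. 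The Loewner comparisons then follow from differencing this identity against the analogous ones with $P_{j-1}$ replaced by $P_j$ (to control $P_{j+1}-P_j$) and by $P_{ss}$ (to control $P_j-P_{ss}$): the differences satisfy Stein equations whose forcing terms are symmetric congruences of the previously-established Loewner gaps, so PSD-monotonicity of the Stein inverse closes the induction using the identity $A_{j+1}-A_j = BR^{-1}B^\top(P_j - P_{j-1})F_{ss}$.

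Granting monotonicity, a Loewner-decreasing sequence of positive semi-definite matrices bounded below converges entrywise; let $P^\star := \lim_j P_j$. Continuity of \eqref{eq:prop:A_recovery:lin} then gives $A_j \to A^\star := (\Id + BR^{-1}B^\top P^\star) F_{ss}$, and passing to the limit in \eqref{eq:prop:A_recovery:sylvester} shows that $(A^\star, P^\star)$ solves the joint system of Proposition~\ref{prop:param_recovery_sylvester_P}; the uniqueness guaranteed there under stabilizability of $(A,B)$ pins the limit to $(A, P_{ss})$. Unlike the Kleinman setup, where the Riccati residual is quadratic-convex and Newton's method yields monotonicity for free, here the alternating update is bilinear in the pair $(A_j, P_j)$, and the Stein-form rewrite together with the symmetrization step is what I expect to demand the most work when fleshing out the argument.
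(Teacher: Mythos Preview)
Your plan and the paper's argument diverge in approach. The paper does not unroll a Stein-form induction; instead it notes that the update \eqref{eq:prop:A_recovery:lin} produces, by construction, a gain $K_{j+1}$ with $A_{j+1}-BK_{j+1}=F_{ss}$, so each pair $(A_{j+1},B)$ is stabilizable, and then asserts that the resulting sequence $\{P_j,K_{j+1}\}$ has the same recursive form as Kleinman's Newton iterates for the Riccati equation, importing the Loewner monotonicity $P_{ss}\preceq P_{j+1}\preceq P_j$ directly from \cite{Kleinman:1968:iterative_riccati}. Your route---redoing the monotonicity comparison from scratch via Stein equations---is closer to what a self-contained proof would look like, but it carries a real obstruction that the paper sidesteps by deferring to Kleinman.

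The gap sits exactly where you flag the ``delicate point.'' The Sylvester iterate $P_j$ is in general \emph{not symmetric}: transposing \eqref{eq:prop:A_recovery:sylvester} gives $P_j^\top - F_{ss}^\top P_j^\top A_j = Q$, a different Sylvester equation whenever $A_j\neq F_{ss}$, so generically $P_j\neq P_j^\top$. Consequently the forcing term $Q+F_{ss}^\top P_{j-1}BR^{-1}B^\top P_j F_{ss}$ in your Stein rewrite is not symmetric, PSD-monotonicity of the Stein inverse does not apply, and the Loewner chain $P_{ss}\leqq P_{j+1}\leqq P_j$ is not even well posed as written. No purely algebraic ``symmetrization'' repairs this inside the bilinear fixed-point framing; what is needed is precisely the identification with a Lyapunov-type recursion whose iterates are symmetric by construction, which is what the paper's appeal to \cite{Kleinman:1968:iterative_riccati} is doing. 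A secondary issue: your base case restricts $A_0$ to a neighborhood of the true $A$, whereas the proposition only asks that $A_0$ avoid the spectrum of $-F_{ss}^{-1}$, so even a successful induction would establish a strictly weaker statement than claimed.
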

\begin{proof}
This result follows from a similar iterative procedure in \cite{Kleinman:1968:iterative_riccati}. The key idea is that instead of modifying the $F_{ss}$ at every step holding $A$ constant, we let $A$ be adjusted so that $F_{ss}$ stays constant. 
% Indeed, as $A$ is adjusted by the second step, itself derived from the control gain equation, it ensures that there is a control gain $K_j$ such that $A_j - B K_j = F_{ss}$. This in turn ensures that $(A_j, B)$ is stabilizable, and that the solution $P_j \succ 0$ to the DARE \eqref{eq:DARE} of parameters $\{A_j, B, Q, R\}$ is unique and well defined. Then the pair $\{P_j, K_{j+1}\}$ satisfy the same recursive form as in \cite{Kleinman:1968:iterative_riccati}. It follows that $P_{j} - P_{j+1} \succeq 0$, and monotonic convergence ensures $P_j \to P$. 
\begin{enumerate}
    \item
    By our assumption on $A_0$, $P_0$ defined as a solution to the Sylvester equation \eqref{eq:prop:A_recovery:sylvester} is unique and well-defined.
    Let $P_j$ be well-defined. Then as $A_{j+1}$ is adjusted by the second step \eqref{eq:prop:A_recovery:lin}, itself derived from the control gain equation, it ensures that there is a control gain $K_{j+1}$ such that $A_{j+1} - B K_{j+1} = F_{ss}$.  This in turn ensures that $(A_{j+1}, B)$ is stabilizable, and that the solution $P_{j+1} \succ 0$ to the DARE \eqref{eq:DARE} of parameters $\{A_{j+1}, B, Q, R\}$ is unique and well defined. Then by induction, the sequence $\{P_j, K_{j+1}\}$ for $j=1,2,\dots$ is well-defined. Furthermore, it satisfies the same recursive form as in \cite{Kleinman:1968:iterative_riccati}, and it follows from  \cite{Kleinman:1968:iterative_riccati} that $P_{j} - P_{j+1} \succeq 0$, $P_j - P_{ss} \succeq 0$. 
    \item Monotonic convergence of positive operators ensures that the limit for $P_j$ exists and the lower bound and uniqueness of $P_{ss}$ yields $\lim_{j\to \infty}P_j = P_{ss}$. Then by Prop.~\ref{prop:param_recovery_sylvester_P}, it holds that $A_j \to A$.
\end{enumerate}
\end{proof}
% \harrison{should this be enumerated?}

We require $A_0$ to not share any eigenvalue with $-F_{ss}^{-1}$. Since $(A,B)$ is stabilizable and $F_{ss}$ is invertible, then $F_{ss}$ has eigenvalues of norm less than 1 and greater than 0. Possible initalizations for $A_0$ thus include $A_0 \in \{F_{ss}, \Id, 0\}$.  

The above result provides monotonic and quadratic %\cite{Kuera1973ARO} %(\victor{quadratic, as a modification to Newton's method}) 
convergence (see Fig.~\ref{fig:recovery_comparison}\textbf{A}) to the true $A$ when we have access to $F_{ss}$, and know the other parameters $\{B,R,Q\}$. 
In practice, one may have only a-priori estimates or imperfect measurements of any of these parameters.  The proof above only holds for the true parameters, thus convergence in this setting is not guaranteed. 
Nonetheless, we have found this iterative method to fare nearly just as well as a numerical root-finding algorithm based on Newton's method, and the proof above can give insights into the well-posedness of the problem and convergence guarantees. %\victor{(provide details on it?)} \juncal{which type of details can you provide?}.
% we've found that for such general contexts, root finding methods can be used. 
To assess this sensitivity, we investigated the impact of adding a scalar perturbation $\epsilon \Id$, $\epsilon > 0$, to either $F_{ss}$ or $Q$ on our estimate for $A$. We show in Fig.~\ref{fig:recovery_comparison}\textbf{C}-\textbf{D} that both iterative and root finding methods handle this perturbation similarly. %$, with the error scaling \victor{as approximately $o(\sqrt{\epsilon})$}. \juncal{if this scaling is just observed in an example and by numerics.. I would avoid mentioning it for now, but I think would be very interesting to characterize for the final version of the paper}

\paragraph{Solving for $B$ and  $R$} Next, we notice that $\{B, R\}$ are in direct interaction through $B R^{-1} B^{\top}$. One can solve for $B R^{-1} B^{\top}$ given $\{A, Q, F_{ss}\}$ by solving for the unique $P$ in \eqref{eq:prop_recov:sylvester_P} and substituting it into the second equation \eqref{eq:prop_recov:lin_P}. As for the specific values of $B$ and $R$, they are not strictly identifiable. For instance, for any given $B$ and $R \succ 0$, let $\hat B = B R^{-\frac{1}{2}} U R^{\frac{1}{2}}$ for $U \in \R^{m \times m}$ unitary, arbitrary. Then 
\begin{align*}
    \hat B R^{-1} {\hat B}^\top &= \left(B R^{-\frac{1}{2}} U R^{\frac{1}{2}}\right) R^{-1} \left(B R^{-\frac{1}{2}} U R^{\frac{1}{2}}\right)^\top \\
    &= B R^{-\frac{1}{2}} U U^\top R^{-\frac{1}{2}} B^\top \\
    &= B R^{-1} B^\top,
\end{align*}
making $\hat B$ and $B$ non-identifiable. %\victor{Reformulate this as a set over $(B,R)$ that has the same value for $BR^{-1}B^\top$?}\juncal{i think that might be neater, but maybe leave it for the final version}

\paragraph{Solving for $Q$} Finally, solving for our last parameter of interest can be done quite easily if $BR^{-1} B^\top$ is invertible, as it allows to solve for $P_{ss}$ in \eqref{eq:prop_recov:lin_P} and subsitute it in \eqref{eq:prop_recov:sylvester_P} to readily read of $Q$. For instance, if $B=R=\Id$, then $Q$ is immediately available from \eqref{eq:AQF_infinite}. If $BR^{-1} B^\top$ is not invertible, for instance, if the control inputs $u_t$ are lower dimensional than the state $x_t$, then care must be taken by considering the pseudo-inverse and identify its null-space.

\begin{figure}
    \centering
    \includegraphics[width=0.9\columnwidth]{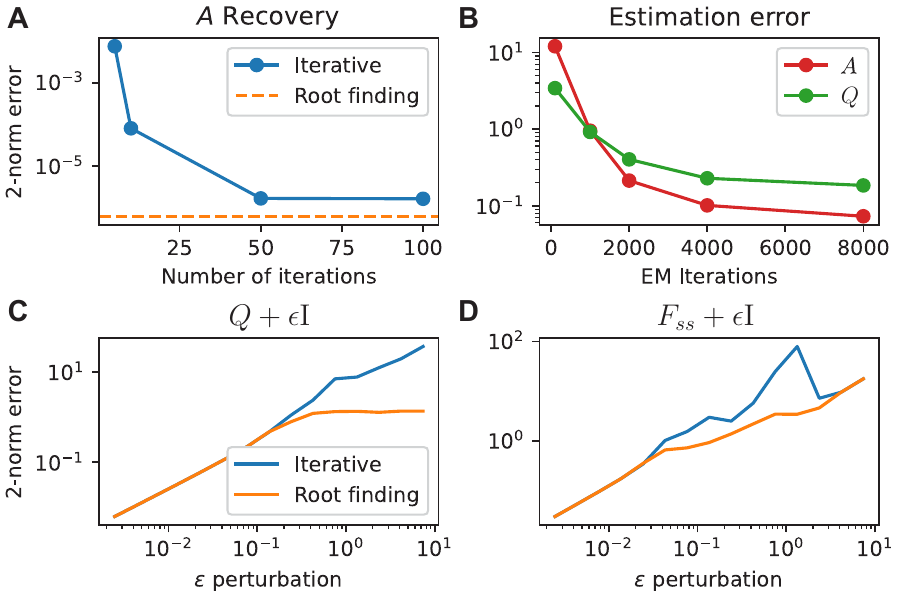}
    \caption{Parameter estimation convergence and robustness properties. (\textbf{A}) Matrix 2-norm error between the true parameter $A$ used for simulation ($4\times 4$ unitary rotation) and the estimated $A$ from the iterative procedure in Prop.~\ref{prop:iterative} as a function of the number of iterations. (\textbf{B}) Similar estimation error to \textbf{A} for $A$ given $Q$ as well as for $Q$ given $A$, this time using $\hat F_{ss}$ estimated from the EM-algorithm, as a function of the number of iterations. We use $C=\Id$ for identifiability. (\textbf{C}-\textbf{D}) Recovery error for $A$ as a function of $Q$ (resp. $F_{ss}$) with perturbed entries. See text for further details.}
    % \victor{Algorithms for identifying $A$ given $F_{ss}, Q, B, R$ converge well.} \juncal{rather than saying 'converge well' I would say 'convergence of the algorithms for ...'}}
    \label{fig:recovery_comparison}
\end{figure}

\section{Partially Observed Finite Horizon}\label{s:finite}

In the \textit{finite} horizon stochastic LQR setting described in \eqref{eqs:LQR_problem}, the closed-loop dynamics satisfy \eqref{eq:closed_loop_finite}. We make the simplifying assumption of considering $Q_T=Q$. Now, just as in the infinite-horizon setting, our system transition probabilities satisfy
\begin{align}
    p_\theta(x_{t-1} \mid x_t, u^*_t) &= p(x_{t-1} \mid x_t, A,B,Q,R, \theta_n) \nonumber\\
    &= p(x_{t-1} \mid x_t, \mathcal{F}_t(A,B,Q,R) = F_t, \theta_n)
\end{align}
for $t \in \{1, \dots, T-1\}$. Importantly, again, the system parameters $\{A,B\}$ and performance metric parameters $\{Q,R\}$ only affect our marginal likelihood through the closed-loop dynamics $F_t$. In turn, this yields a similar system of equations to Prop.~\ref{prop:param_recovery_sylvester_P} for the finite-horizon setting.
\begin{proposition}\label{prop:param_recovery_sylvester_Pt}
    Consider the \textit{finite} horizon (stochastic) LQR problem with system parameters $\{A, B\}$ and control parameters $\{Q, R\}$. Let $(A,B)$ be stabilizable, and let $F_t$ be the closed-loop dynamics for $t \in \{1,\dots,T-1\}$. Then,
    \begin{subequations}\label{eqs:prop:param_recovery_sylvester_finite}
    \begin{align}\label{eq:prop_recov:sylvester_P_finite}
        P_t - A^\top P_{t+1} F_{t} &= Q
    \end{align}
    satisfies the matrix equation
    \begin{align}\label{eq:prop_recov:lin_P_finite}
         B R^{-1} B^\top  P_{t+1} F_t = A - F_t
    \end{align}
    and $P_T = Q$.
    \end{subequations}
\end{proposition}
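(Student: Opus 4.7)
The plan is to mirror the derivation from Proposition~\ref{prop:param_recovery_sylvester_P} essentially verbatim, but carried through the time-varying, backward recursion of the finite-horizon case. The time index $t$ is the only new bookkeeping; the algebraic identities that do the work are identical.

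First I would write down the finite-horizon Riccati recursion \eqref{eq:riccati} as the starting point and recognize the rightmost factor $(B^\top P_{t+1} B + R)^{-1} B^\top P_{t+1} A$ as the control gain $K_t$ from \eqref{eq:K_t}. Substituting $K_t$ into the Riccati term collapses it into $A^\top P_{t+1} B K_t$, so the recursion becomes
\begin{align*}
    P_t = Q + A^\top P_{t+1}\bigl(A - B K_t\bigr) = Q + A^\top P_{t+1} F_t,
\end{align*}
which is \eqref{eq:prop_recov:sylvester_P_finite}. The boundary condition $P_T = Q$ is just the simplifying assumption $Q_T = Q$ together with the terminal condition $P_T = Q_T$ stated just after \eqref{eq:riccati}.

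Second I would derive \eqref{eq:prop_recov:lin_P_finite} by repeating the manipulation in \eqref{eq:lin_P:derivation} pointwise in $t$. Starting from $F_t = A - B K_t$ and factoring $A$ on the right yields
\begin{align*}
    F_t = \bigl(\Id - B(R + B^\top P_{t+1} B)^{-1} B^\top P_{t+1}\bigr) A,
\end{align*}
and the Woodbury identity rewrites the bracketed factor as $(\Id + B R^{-1} B^\top P_{t+1})^{-1}$. Inverting gives $A = (\Id + B R^{-1} B^\top P_{t+1}) F_t$, and subtracting $F_t$ from both sides delivers \eqref{eq:prop_recov:lin_P_finite}.

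The only step that requires any thought is well-posedness of $P_{t+1}$: unlike the infinite-horizon DARE, here the backward recursion \eqref{eq:riccati} from $P_T = Q$ uniquely determines each $P_t$ provided $B^\top P_{t+1} B + R$ is invertible, which follows from $R \succ 0$ and $P_{t+1} \succeq 0$ (maintained along the recursion since the Riccati operator preserves positive semi-definiteness). Stabilizability of $(A,B)$ is therefore not needed for existence or uniqueness at finite $T$; I would simply note that it is inherited from the problem statement and is not invoked in the argument. That essentially completes the proof — it is not hard so much as it is a careful transcription of the infinite-horizon calculation with the Riccati recursion replacing the DARE.
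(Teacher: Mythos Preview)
Your proposal is correct and matches the paper's approach: the paper does not even supply a separate proof for this proposition, treating it as an immediate time-indexed transcription of the infinite-horizon derivations \eqref{eq:lin_P:derivation}--\eqref{eq:sylvester_P:derivation}, which is exactly what you do. Your remark that stabilizability is not actually needed for the finite-horizon recursion (since $R\succ 0$ already guarantees invertibility of $B^\top P_{t+1}B + R$) is a correct refinement that the paper does not make explicit.
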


% In the infinite horizon setting, our strategy to isolating any given parameter of interest relied on substituting for $P_{ss}$. 
The main difference with the infinite horizon setting in terms of recovery considerations comes down to the terminal condition $P_T = Q$. This means that we can restrict the solution space over $\Theta_s$ strictly from our last step. Indeed, for $t=T-1$, from \eqref{eq:prop_recov:lin_P_finite} we obtain
\begin{align}
    B R^{-1} B^\top  P_{T} F_{T-1} &= A - F_{T-1} \nonumber\\
    \Longleftrightarrow\quad A &= \left(B R^{-1} B^\top Q + \Id\right)F_{T-1}\label{eq:terminal_FT-1_AQ}
\end{align}
This provides an essential, supplemental, link between the parameters $\{A,B,Q,R\}$. 

To leverage this, consider again $B=R=\Id$. Starting from \eqref{eq:prop_recov:sylvester_P_finite} and substituting in \eqref{eq:prop_recov:lin_P_finite},
\begin{align*}
    P_{T-1} &= Q + A^\top P_T F_{T-1} \\
    \Longleftrightarrow (A - F_{T-2}) F_{T-2}^{-1} &= Q + A^\top (A - F_{T-1})
\end{align*}
and now \eqref{eq:terminal_FT-1_AQ} reads as $A = (Q + \Id)F_{T-1}$, such that
\begin{multline}
     \left(A - F_{T-1}\right) F_{T-1}^{-1} = \\ (A - F_{T-2}) F_{T-2}^{-1} - A^\top \left(A - F_{T-1}\right).
\end{multline}

This expression is similar to \eqref{eq:AQF_infinite} in the infinite horizon setting, but now using our terminal condition to implicitly solve for $Q$. With this, we can now solve this expression for $A$, and substitute in \eqref{eq:terminal_FT-1_AQ} to recover \textit{both} $A$ \textit{and} $Q$. We use root-finding methods to solve for $A$. See Fig.~\ref{fig:parameter_recovery}(bottom) for a numerical example supporting this.

{In the above, we focused on leveraging the terminal condition to identify more parameters than in the infinite horizon setting. We note that generally the now \textit{time-varying} closed-loop dynamics provide us with many more distinct equations. 
% This could lend itself to an 
% We hypothesize that in theory one could recover most if not all parameters in this setting.} \juncal{i would remove the last sentence and try to properly show something on that for final version}

\section{Infinite Horizon with Partially Observed State and Control}\label{s:inf+control}

We use the EM algorithm to identify the dynamical system \eqref{eq:system:xt} operating in closed-loop, under the optimal control input \eqref{eq:K_ss} corresponding to the \textit{infinite} horizon stochastic LQR setting. We provide explicit expressions for the E-step and M-step of the EM algorithm, under two different linear Gaussian observation models \eqref{eq:system:yt}:
\begin{subequations}
    \begin{align}
        y_t  & = C x_t + \nu_t, \label{eq:observations:a}\\
        y_t  & =
\begin{bmatrix}
    C_x  x_t\\
    u_t
\end{bmatrix} + 
\begin{bmatrix}
    \nu_{x,t} \\
    \nu_{u,t}
\end{bmatrix} = 
\begin{bmatrix}
    C_x \\
    - K_{ss}
\end{bmatrix} x_t + 
\begin{bmatrix}
    \nu_{x,t}\\
    \nu_{u,t}
\end{bmatrix} \nonumber \\
& \text{ with }
\mathbb{E}\left[
    \begin{bmatrix}
        \nu_{x,t} \\
         \nu_{u,t} 
    \end{bmatrix}  \begin{bmatrix}
        \nu_{x,t} & 
         \nu_{u,t} 
    \end{bmatrix}\right] = 
    \begin{bmatrix}
        V_{x} & 0 \\
        0 & V_{u}
    \end{bmatrix}.
    \label{eq:observations:b}
    \end{align}
\end{subequations}
That is, while the observation model \eqref{eq:observations:a} only has access to partial and noisy measurements of the state, model \eqref{eq:observations:b} additionally has access to noisy measurements of the control input. For convenience, we denote $y_{x,t} = C_x x_t + \nu_{x,t}$ and $y_{u,t} = u_t + \nu_{u,t}$ in \eqref{eq:observations:b}.

%We further assume that in \textit{model (b)} the noise processes perturbing state and control observations are independent.

We show that while \textit{model (a)} only allows for the identification of the closed-loop $F_{ss}$, \textit{model (b)} provides a further relationship between the inner parameters of the closed-loop $B$ and $K_{ss}$. Technical details are deferred to Appendix \hyperref[app:EM_details]{B}.

%\juncal{(\textbf{to do:} the above can be better formulated as a proposition once we decide what to assume on B etc.)}

%\juncal{(\textbf{to do:} might be a good idea to include pseudocode with a summary of the algorithm)}

% \begin{table}
% \caption{An Example of a Table}
% \label{table_example}
% \begin{center}
% \begin{tabular}{|c||c|}
% \hline
% One & Two\\
% \hline
% Three & Four\\
% \hline
% \end{tabular}
% \end{center}
% \end{table}

%%%%%%%%%%%%%%%%%%%%%%%%%%%%%%%%%%%%%%%%%%%%%%%%%%%%%%%%%%%%%%%%%%%%%%%

%%%%%%%%%%%%%%%%%%%%%%%%%%%%%%%%%%%%%%%%%%%%%%%%%%%%%%%%%%%%%%%%%%%%%%%%%%%%%%%%
\section{Conclusions and Future Works}

\begin{figure}
    \centering
    \includegraphics[width=0.7\columnwidth]{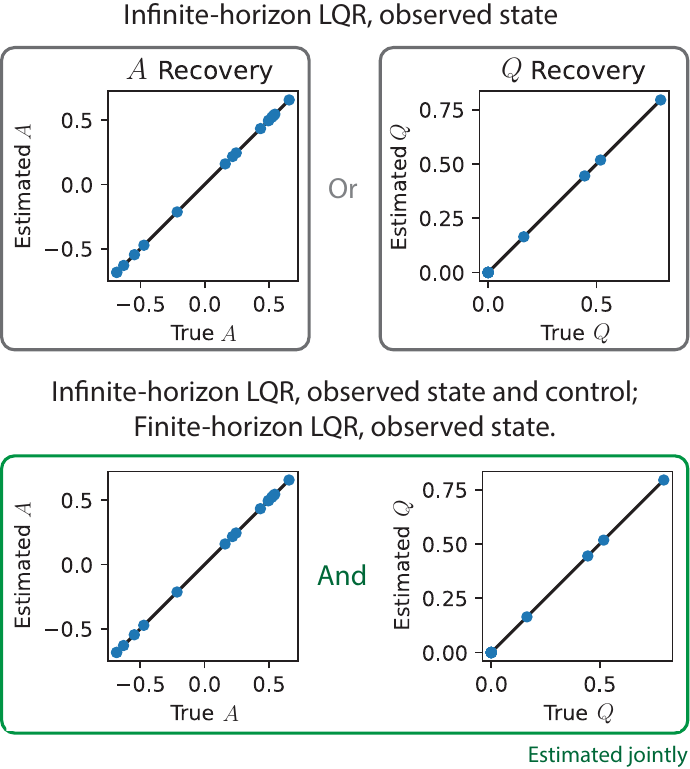}
    \caption{
    % \victor{Needs to be filled in with concrete example matrices. Also, thinking of interchanging ``Estimated'' and ``Inferred''.} \juncal{i like inferred} 
    Overview of the identification possibilities in each setting. 
    % using as reference the true parameters from Fig.~\ref{fig:iden_infinite_motivation}. 
    In the infinite horizon, we provide exact and numerical estimation procedures for one parameter given the others; with $B=R=\Id$, we present $A$ given $Q$ and $Q$ given $A$. In the finite horizon, we can leverage terminal conditions to estimate both $A$ and $Q$ (if $Q_T=Q$) simultaneously. In the infinite horizon with both $x_t$ and $u_t$ partially observed, and given $B=R=\Id$, we can estimate $A$ and then estimate $Q$.}
    \label{fig:parameter_recovery}
\end{figure}

% \subsection{Conclusions}

% \subsection{Future Works}

We have investigated the identifiability of system and control parameters from partial observations, and provided two extensions that substantially improve identifiability. %These results can be summarized by the schematic in Fig.~\ref{fig:identification}. 
Future theoretical work should explore how estimation errors during system identification influence the recovery of system and control parameters, as well as data-efficient methods for parameter inference in the finite-horizon setting.

These methods provide a promising path towards domain-specific problems of estimating the control properties of partially observed neural systems. For example, the relationship between MEG sensors and the underlying neuromagnetic sources is approximately linear \cite{ilmoniemi2019brain}, suggesting that our methods for system and control recovery may be helpful for understanding human neural control processes from noninvasive recordings.

%%%%%%%%%%%%%%%%%%%%%%%%%%%%%%%%%%%%%%%%%%%%%%%%%%%%%%%%%%%%%%%%%%%%%%%%%%%%%%%%
{\sc Acknowledgements}
% {The authors gratefully acknowledge the contribution of the Julia development team at MIT}. 
The authors would like to thank Henri Schmidt for the helpful discussions.
VG was supported by doctoral scholarships from the Natural Sciences and Engineering Research Council of Canada (NSERC) and the Fonds de recherche du Qu\'ebec – Nature et technologies (FRQNT). % [PGSD3-557875-2021]  , [B2X 297667]
JA was supported by the Schmidt Science Fellowship from Schmidt Futures and the Rhodes Trust.
HR was supported by the C.V. Starr Fellowship.
JDC was supported by a Vannevar Bush Faculty Fellowship supported by ONR.
JWP was supported by grants from the Simons Collaboration on the Global Brain (SCGB AWD543027), the NIH BRAIN initiative (9R01DA056404-04), an NIH R01 (NIH 1R01EY033064), and a U19 NIH-NINDS BRAIN Initiative Award (U19NS104648).
% JWP was supported by grants from the Simons Collaboration on the Global Brain, the NIH BRAIN initiative, an NIH R01, and a U19 NIH-NINDS BRAIN Initiative Award.
% \vspace{-14pt}

%%%%%%%%%%%%%%%%%%%%%%%%%%%%%%%%%%%%%%%%%%%%%%%%%%%%%%%%%%%%%%%%%%%%%%%%%%%%%%%%

\section*{APPENDIX}

\subsection{Equivalence between models in marginal likelihood}\label{app:model_equivalence}

Without loss of generality, we omit the dependence on the noise parameters $\theta_n$ in writing our model distributions $p_\theta(x_{0:T}, y_{0:T})$ below. We start by defining the LG-SSM model class for completeness.
\begin{definition}
    A \textit{linear Gaussian state-space model} (LG-SSM) with parameters $\varphi = \{A, C, V, W_0, W\}$ is defined by the joint distribution $q_\varphi(x_{0:T}, y_{0:T})$ over the latent sequence $x_{0:T}$, $x_t \in \R^n$, and observations $y_{0:T}$, $y_t \in \R^d$, satisfying
    \begin{align*}
        q(x_0) = \mathcal{N}(0, W_0), \quad q(x_{t+1} \mid x_t, \varphi) &= \mathcal{N}(x_{t+1} ; A x_t, W), \\
        q(y_t \mid x_t, \varphi) &= \mathcal{N}(y_t ; C x_t, V).
    \end{align*}
\end{definition}

% \juncal{questions -- (1) do we want the observation index to start at 0 or 1 in here? Later in the proof, the observation index starts at 0. Consistency. (2) In the proposition below, is it clear/standard what is meant by 'a linear dynamical statistical model'? I get what you mean, but I do not know if it's clear enough for a proposition statement}.

\begin{proposition}\label{prop:appendix:eq_LDS_LQR}
    Let $\mathcal{P} = \{p_\theta : \theta \in \Theta\}$ be the statistical model defined by \eqref{eqs:LQR_problem}-\eqref{eq:system:yt} with parameter space $\Theta$, and let $\mathcal{Q} = \{q_\varphi : \varphi \in \Phi\}$ be an LG-SSM with parameter space $\Phi$. Let our dataset $\mathbf{y}$ be sampled from a model in $\mathcal{P}$, then as $\abs{\mathbf{y}} \to \infty$ %if $p_{\hat\theta} \in \mathcal{P}$ is the true model with parameters $\theta = \hat\theta_o \cup \hat\theta_s$, then
    \begin{equation*}%\label{eq:prop:argmax_LDS}
        \hat\theta \in \argmax_{\theta \in \Theta} p_\theta\left(\mathbf{y}\right)
        \Longleftrightarrow %\\
        \{\hat\theta_o, \mathcal{F}(\hat\theta_s)\} \in \argmax_{\varphi \in \Phi} q_\varphi\left(\mathbf{y}\right)
    \end{equation*}
    % $p_{\hat\theta}$ a.s. Conversely, if \eqref{eq:prop:argmax_LDS} is satisfied for some $\hat\theta$, then $p_{\hat\theta}$ is equivalent to the true model. 
\end{proposition}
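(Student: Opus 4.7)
My plan is to reduce the statement to the pointwise identity of marginal likelihoods under a natural reparametrization, and then argue that the argmax correspondence follows from a characterization of the image of $\mathcal{F}$, with the asymptotic regime handling any residual mismatch.

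The first step is to write out the joint density under $p_\theta$ by factoring over time. By the Markov structure induced by substituting the optimal feedback $u_t^\star = -K_{ss} x_t$ into the dynamics \eqref{eq:system:xt}, we have
\begin{equation*}
p_\theta(x_{0:T}, y_{0:T}) = p(x_0 \mid \theta_n) \prod_{t=0}^{T-1} p(x_{t+1} \mid x_t, \theta_s, \theta_n) \prod_{t=0}^{T} p(y_t \mid x_t, \theta_o, \theta_n),
\end{equation*}
and by \eqref{eq:closed_loop_infinite} each transition factor is $\mathcal{N}(x_{t+1}; \mathcal{F}(\theta_s) x_t, W)$, while the emission factor $\mathcal{N}(y_t; C x_t, V)$ is identical to that of an LG-SSM with parameters $\varphi = \{\mathcal{F}(\theta_s), \theta_o, \theta_n\}$. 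Comparing with the LG-SSM factorization term by term yields $p_\theta(x_{0:T}, y_{0:T}) = q_{\varphi}(x_{0:T}, y_{0:T})$, and integrating out $x_{0:T}$ gives the pointwise identity $p_\theta(\mathbf{y}) = q_{\{\mathcal{F}(\theta_s), \theta_o, \theta_n\}}(\mathbf{y})$ for every realization $\mathbf{y}$.

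The second step is to translate this identity into the argmax correspondence. The forward direction is immediate: any $\hat\theta \in \argmax_\theta p_\theta(\mathbf{y})$ produces a point $\{\hat\theta_o, \mathcal{F}(\hat\theta_s)\} \in \Phi$ achieving value $p_{\hat\theta}(\mathbf{y})$ under $q$, and by pointwise equality this value equals $\sup_{\theta'} p_{\theta'}(\mathbf{y})$, which in turn upper bounds $\sup_\varphi q_\varphi(\mathbf{y})$ only if the image $\mathcal{F}(\Theta_s)$ is large enough. So the substantive content is the converse and the equality of suprema, which requires understanding which LG-SSM dynamics matrices are realizable as closed-loop matrices of some stabilizable $(A,B)$ with positive (semi)definite $(Q,R)$. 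Here I would invoke the fact that any Schur-stable matrix $F$ can be written as $A - B K$ for an LQR solution (e.g.\ by choosing $A = F$, $B = I$, $R = I$, and any $Q \succeq 0$, giving $K_{ss} = 0$ and $F_{ss} = F$), so $\mathcal{F}(\Theta_s)$ contains the entire Schur-stable cone.

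The third step is to invoke the $\lvert \mathbf{y}\rvert \to \infty$ hypothesis. Because the data is generated from some $p_{\theta^\star} \in \mathcal{P}$ with stable closed-loop dynamics $F^\star = \mathcal{F}(\theta_s^\star)$, MLE consistency for LG-SSMs ensures that $\argmax_\varphi q_\varphi(\mathbf{y})$ concentrates (up to the latent-space similarity ambiguity discussed in Section~\S\ref{ss:setting_EM}) on the Schur-stable parameter $F^\star$, which lies in $\mathcal{F}(\Theta_s)$ by construction. Consequently, with probability going to one, the supremum over $\Phi$ is attained on the image $\mathcal{F}(\Theta_s) \times \Theta_o \times \Theta_n$, which by the pointwise identity exactly matches the supremum over $\Theta$, giving the converse inclusion.

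The main obstacle is the surjectivity-style step: verifying that the image of $\mathcal{F}$ covers a neighborhood of the true $F^\star$ in $\Phi$, and that the extremality of $\mathcal{F}(\hat\theta_s)$ in $\Phi$ is inherited from the extremality of $\hat\theta$ in $\Theta$ even when $\mathcal{F}$ is many-to-one. The asymptotic hypothesis is what lets us sidestep this for finite samples by appealing to consistency; without it, one would need to characterize $\mathcal{F}(\Theta_s)$ as precisely equal to (not merely containing) the Schur-stable matrices, and to handle the fibers of $\mathcal{F}$ explicitly.
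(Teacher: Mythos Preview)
Your argument is correct and follows essentially the same route as the paper: establish the pointwise identity $p_\theta(\mathbf{y}) = q_{\{\theta_o,\mathcal{F}(\theta_s)\}}(\mathbf{y})$ via the Markov factorization, which embeds $\mathcal{P}$ into $\mathcal{Q}$, and then use the $\abs{\mathbf{y}}\to\infty$ hypothesis (MLE consistency, with the true model in $\mathcal{P}$) to ensure the maximizer over the larger space $\Phi$ actually lies in the image of the embedding. The paper phrases this last step as a short contradiction argument rather than via concentration, but it is the same idea.

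One remark: your explicit surjectivity construction (take $A=F$, $B=R=I$, $Q=0$ so that $K_{ss}=0$) is a nice addition, but it is not needed and the paper does not use it. Once you invoke consistency, the maximizer over $\Phi$ is forced to coincide with the true $\{\theta_o^\star,\mathcal{F}(\theta_s^\star)\}$, which is in the image by construction; you do not need the image to contain \emph{all} Schur-stable matrices, only the single true one. Your ``main obstacle'' paragraph is therefore more cautious than necessary.
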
 
% \victor{realised we might be missing a detail in one direction, it should be the pre-image of $\mathcal{F}$, or perhaps argmax as a set including the parameters.}

\begin{proof}
    First, we note that the observation models for the two statistical models coincide, that is $p_{\theta_o}(y_{0:T} \mid x_{0:T}) = q_{\theta_o}(y_{0:T} \mid x_{0:T})$ for all $\theta_o \in \Theta_o$. %, and vice-versa for $\varphi_o \in \Phi_o$. 
    Next for the dynamics distribution, we expand on \eqref{eq:dynamics_prob:lds_equiv} and note that 
    \begin{align*}
        p_{\theta_s}(x_{0:T}) 
        % &= p(x_1) \prod_{t=1}^{T-1}  p(x_{t+1} \mid x_t, \theta_s)  \\
        % &= p(x_1) \prod_{t=1}^{T-1}  p(x_{t+1} \mid x_t, A,B,Q,R)\\
        &= p(x_0) \prod_{t=0}^{T-1}  p(x_{t+1} \mid x_t, \mathcal{F}(\theta_s)) \\
        &= q(x_0) \prod_{t=0}^{T-1}  q(x_{t+1} \mid x_t, \mathcal{F}(\theta_s)) = q_{\mathcal{F}(\theta_s)}(x_{0:T}) 
    \end{align*}
    %\juncal{question -- the index in the RHS's above, shouldn't start at 0 instead of 1?}
    for all $\theta_s \in \Theta_s$. Finally, marginalizing over $x_{0:T}$, we obtain
    \begin{align*}
        p_{\theta}(y_{0:T}) %&= \int p_{\theta}(y_{0:T} \mid x_{0:T}) p_{\theta}(x_{0:T})d x_{0:T} \\
        &= \int p_{\theta_o}(y_{0:T} \mid x_{0:T}) p_{\theta_s}(x_{0:T})d x_{0:T}\\
        &= \int q_{\theta_o}(y_{0:T} \mid x_{0:T}) q_{\mathcal{F}\left(\theta_s\right)}(x_{0:T})d x_{0:T} \\
        &= q_{\{\theta_o, \mathcal{F}\left(\theta_s\right)\}}(y_{0:T})
    \end{align*}
    for all $\theta \in \Theta$. We have shown that $\mathcal{P} \subset \mathcal{Q}$ through the mapping $\{\theta_o, \theta_s\} \mapsto \{\theta_o, \mathcal{F}(\theta_s)\}$, and $\Theta \subset \Phi$.
    Now, for the proof,
    \begin{enumerate}
        \item[$\Rightarrow$)] By assumption,
        \begin{equation*}
            \max_{\theta \in \Theta} p_\theta(\mathbf{y}) = p_{\hat\theta}(\mathbf{y}) = q_{\{\hat\theta_o, \mathcal{F}\left(\hat\theta_s\right)\}}(\mathbf{y}).
        \end{equation*}
        % \juncal{question -- under the max above, do you mean $\theta \in \Theta$ instead?}
        To show this is in the maximum set, let us assume there exists another $\varphi \in \Phi$ such that $q_{\varphi}(\mathbf{y}) > q_{\{\hat\theta_o, \mathcal{F}\left(\hat\theta_s\right)\}}(\mathbf{y})$ holds as $\abs{\mathbf{y}} \to \infty$. 
        % \juncal{question -- I am missing the point of how $|\mathbf{y}| \to \infty $ is used in this proof } 
        Then, since the true model is in $\mathcal{P}$, there exists $\tilde\theta$ such that $\varphi = \tilde\theta_o \cup \tilde\theta_s$, which implies $p_{\tilde\theta}(\mathbf{y}) > p_{\hat\theta}(\mathbf{y})$ \textemdash~a contradiction.
        \item[$\Leftarrow$)] This direction follows from from $\Theta \subset \Phi$. If  $\hat\theta_o \cup \mathcal{F}(\hat\theta_s) \in \argmax_{\varphi \in \Phi} q_\varphi(\mathbf{y})$, % \juncal{same comment as before for the variable under the max} 
        then 
        $
            % \max_\Phi q_\varphi &= q_{\hat\theta_o, \mathcal{F}(\hat\theta_s)} \\
            q_{\hat\theta_o, \mathcal{F}(\hat\theta_s)} \geq q_{\theta_o, \mathcal{F}(\theta_s)}
        $
        for all $\theta \in \Theta$, which is equivalent to $p_{\hat\theta} \geq p_{\theta}$, as desired.
        % , $p_{\hat\theta} \geq q_{\theta, \mathcal{F}(\theta)}$ for all $\theta \in \Theta$ as desired. 
    \end{enumerate}

    % Combining both observations and dynamics models derivations, we obtain
    
    % \begin{align}
    %     \max_\Theta \log p_\theta(y_{0:T})
    %     &= \max_\Theta \log \int p_{\hat\theta_o}(y_{0:T} \mid x_{0:T}) p_{\hat\theta_s}(x_{0:T})d x_{0:T}\\
    %     &\geq \max_\Theta \int \log p_{\hat\theta_o}(y_{0:T} \mid x_{0:T}) p_{\hat\theta_s}(x_{0:T})d x_{0:T} \\
    %     &= \int q_{\hat\theta_o}(y_{0:T} \mid x_{0:T}) q_{\mathcal{F}\left(\hat\theta_s\right)}(x_{0:T})d x_{0:T} \\
    %     &= q_{\{\hat\theta_o, \mathcal{F}\left(\hat\theta_s\right)\}}(y_{0:T})
    % \end{align}
\end{proof}

\subsection{The EM algorithm}
\label{app:EM_details}
The EM algorithm consists of two steps. In the \textit{E-step}, the current parameter values, denoted by $\theta^{\text{old}}$, are used to compute $\mathcal{Q}(\theta \mid \theta^{\text{old}})$, as defined in \eqref{eq:Q:def}. This yields a function of the parameter set $\theta$, which is then maximized in the \textit{M-step} by setting the appropriate partial derivatives to 0, and solving for each parameter. As the expected log-likelihood $\mathcal{Q}(\theta \mid \theta')$ depends on the observation model, we denote by $\mathcal{Q}_{(a)}$ the expected log-likelihood for the observation model \eqref{eq:observations:a}, and similarly for $\mathcal{Q}_{(b)}$.

\addtolength{\textheight}{-1cm}   % This command serves to balance the column lengths
                                  % on the last page of the document manually. It shortens
                                  % the textheight of the last page by a suitable amount.
                                  % This command does not take effect until the next page
                                  % so it should come on the page before the last. Make
                                  % sure that you do not shorten the textheight too much.
                                  
\paragraph{Partial state observations} In the \textit{E-step} for the observation model \eqref{eq:observations:a}, $y_t = C x_y + \nu_t$, the expected log-likelihood $\mathcal{Q}_{(a)}(\theta \mid \theta^{\text{old}})$ can be expressed as a function of the parameters of interest in $\theta$ and the following \textit{sufficient statistics}
\begin{subequations}%\small
    \begin{align}
        M_{(t_1, t_2)}^{(i)} & \coloneqq \sum_{t=t_1}^{t_2} \big(m_t^{(i)} m_t^{(i)\top} + \Sigma_t^{(i)} \big), \label{eq:M_i}\\
        M_{\Delta}^{(i)} & \coloneqq \sum_{t=1}^{T} \big(m_{t-1}^{(i)} m_t^{(i)\top} + \Sigma_{t-1,t}^{(i)} \big), \label{eq:M_delta_i}\\
        Y^{(i)} & \coloneqq \sum_{t=0}^{T}  y_t^{(i)} y_t^{(i)\top},\quad \tilde{Y}^{(i)} \coloneqq \sum_{t=0}^{T}  m_t^{(i)} y_t^{(i)\top}, \label{eq:Y_suffstats}
    \end{align}
    \label{eq:sufficient_stats_i}
\end{subequations}
for the $i-$th sample,
with $m_t^{(i)}$ and $\Sigma_t^{(i)}$ denoting the mean and the covariance of the posterior distribution over the latent variable at time $t$, that is, $p(x_t \, | \, y_{0:T}^{(i)}) = \mathcal{N}(m_t^{(i)}, \Sigma_t^{(i)})$. These are computed by running the Kalman filter-smoother \cite{Bishop2006} in each independent time series $y_{0:T}^{(i)}$ using the parameter values $\theta^{\text{old}}$, which can be performed \textit{in parallel across datasets}.
We can then pool these sufficient statistics as $\mathcal{M}_{(t_1, t_2)}\coloneqq \sum_{i=1}^N M_{(t_1, t_2)}^{(i)}$, $\mathcal{M}_{\Delta}\coloneqq \sum_{i=1}^N M_{\Delta}^{(i)}$, $\mathcal{Y}\coloneqq \sum_{i=1}^N Y^{(i)}$, and  $\tilde{\mathcal{Y}}\coloneqq \sum_{i=1}^N \tilde{Y}^{(i)}$. %in \eqref{eq:incomplete_data_loglikelihood_params}. 

As for the \textit{M-step}, the explicit parameter updates 
 under the observation model \eqref{eq:observations:a} are then
\begin{align}%\small
\label{eq:Mstep_observation_a}
\begin{split}
        \hat{F}_{ss} & = \mathcal{M}_{\Delta}^{\top} \,\mathcal{M}_{(0,T-1)}^{-1}, \hat{C} =  \tilde{\mathcal{Y}}^{\top} \mathcal{M}_{(0,T)}^{-1}, %\label{eq:dL_dC}
        \\
        %\hat{m}_{0} &= \frac{1}{N} \sum_{i=1}^N m_{0}^{(i)}, %\label{eq:X0_identified} \harrison{\text{check that this agrees with your terminology}} \\
        \hat{W}_0 & = \frac{1}{N}  \mathcal{M}_{(0,0)},\\ %+ \frac{1}{N} \sum_{i=1}^N (m_{0}^{(i)} - \hat{m}_{0})( m_{0}^{(i)}-\hat{m}_{0})^{\top}, %\label{eq:X0_identified} 
        % &\victor{\hat W_0 = \frac{1}{N} \mathcal{M}_{(0,0)} - \hat m_0 \hat m_0^\top}\\
       \hat{V} & = \frac{1}{N(T+1)} \Big( \mathcal{Y} - \hat{C} \tilde{\mathcal{Y}} -  \tilde{\mathcal{Y}}^{\top} \hat{C}^{\top} +  \hat{C} \mathcal{M}_{(0,T)} \hat{C}^{\top}\Big),  %\label{eq:V_identified}  
       \\
      \hat{W} &  = \frac{1}{NT}  \Big( \mathcal{M}_{(1,T)} -  \mathcal{M}_{\Delta}^{\top} \hat{F}_{ss}^{\top} \\
      &\hspace{1cm}-
\hat{F}_{ss}  \mathcal{M}_{\Delta} + \hat{F}_{ss} \mathcal{M}_{(0,T-1)} \hat{F}_{ss}^{\top} \Big).  %\label{eq:identified_W} 
\end{split}
\end{align}
 The explicit updates \eqref{eq:Mstep_observation_a} further speed up the computational routine as the numerical approximation of $\nabla_{\theta} \mathcal{Q}$ is not needed for optimization.
%\juncal{(\textbf{question:} need to think what it means  to take $T \to \infty$ in $\mathcal{Q}$ below... some kind of averaging needed?)}

\begin{remark}[Closed-loop identification]
Under the observation model \eqref{eq:observations:a}, $\partial_A \mathcal{Q}_{(a)}  = 0, \partial_B \mathcal{Q}_{(a)} = 0,$ and $\partial_{K_{ss}} \mathcal{Q}_{(a)} = 0$ are redundant. They are all trivially satisfied when $\hat{F}_{ss}$ is as given in \eqref{eq:Mstep_observation_a}, highlighting a lack of identifiability of the inner parameters of the closed-loop.
\end{remark}

\paragraph{Partial state and control observations} The \textit{E-step} for the second observation model \eqref{eq:observations:b} is computed analogously, %to \eqref{eq:incomplete_data_loglikelihood_params},
% omitted here for the sake of space. For this model, 
and we introduce the following  partitions of the sufficient statistics \eqref{eq:Y_suffstats}
\begin{subequations}\label{eq:suffcient_stats:partition}
\begin{equation}
    Y^{(i)} 
    %& = %\sum_{t=0}^{T} 
    %\begin{bmatrix}
   %     \boldsymbol{y}_{x,t}^{(i)}\\
   %     \boldsymbol{y}_{u,t}^{(i)}
    %\end{bmatrix}
   % \begin{bmatrix}
   %     \big(\boldsymbol{y}_{x,t}^{(i)}\big)^{\top} &
    %    \big(\boldsymbol{y}_{u,t}^{(i)}\big)^{\top}
  %  \end{bmatrix} \nonumber \\
   = \sum_{t=0}^{T} 
    \begin{bmatrix}
        y_{x,t}^{(i)} \big( y_{x,t}^{(i)}\big)^{\top} & y_{x,t}^{(i)} \big( y_{u,t}^{(i)} \big)^{\top} \\
        y_{u,t}^{(i)} \big(y_{x,t}^{(i)}\big)^{\top} & y_{u,t}^{(i)} \big( y_{u,t}^{(i)} \big)^{\top} 
    \end{bmatrix} \eqqcolon 
     \begin{bmatrix}
            Y_{xx}^{(i)} &  Y_{xu}^{(i)}  \\
            \big(Y_{xu}^{(i)}\big)^{\top} &   Y_{uu}^{(i)} 
        \end{bmatrix}\label{eq:Y_partition}
\end{equation}
and
\begin{equation}
        \tilde{Y}^{(i)} = \sum_{t=0}^{T} m_t^{(i)}
        \begin{bmatrix}
    y_{x,t}^{(i)\top} & y_{u,t}^{(i)\top}
        \end{bmatrix}   \eqqcolon 
        \begin{bmatrix}
            \tilde{Y}_{xx}^{(i)} & \tilde{Y}_{xu}^{(i)}
        \end{bmatrix}, 
\end{equation}
\end{subequations}
and accordingly denote $\mathcal{Y}_{xx} \coloneqq \sum_{i=1}^N Y_{xx}^{(i)}$, $\mathcal{Y}_{xu } \coloneqq \sum_{i=1}^N Y_{xu}^{(i)}$, $\mathcal{Y}_{uu } \coloneqq \sum_{i=1}^N Y_{uu}^{(i)}$, $\tilde{\mathcal{Y}}_{xx}\coloneqq \sum_{i=1}^N \tilde{Y}_{xx}^{(i)}$, and $\tilde{\mathcal{Y}}_{xu }\coloneqq \sum_{i=1}^N \tilde{Y}_{xu}^{(i)}$. 

The parameter updates $\hat{F}_{ss}$, $\hat{W}_0$, and $\hat{W}$
are as provided in \eqref{eq:Mstep_observation_a}. $\hat{C}_x$ is obtained by replacing $\tilde{\mathcal{Y}} \rightarrow \tilde{\mathcal{Y}}_{xx}$ in the $\hat{C}$ update in \eqref{eq:Mstep_observation_a}, and $\hat{V}_x$ is obtained by replacing $\mathcal{Y} \rightarrow \mathcal{Y}_{xx}$ and $\hat{C} \rightarrow \hat{C}_x$ in the $\hat{V}$ update in \eqref{eq:Mstep_observation_a}.
Finally,
\begin{subequations}
    \begin{align}
          \hat{V}_u & = \frac{1}{N(T+1)}  \Big( \hat{K}_{ss} \tilde{\mathcal{Y}}_{xu} +  \tilde{\mathcal{Y}}_{xu}^{\top} \hat{K}_{ss}^{\top} \nonumber \\
          & \hspace{2.8cm} + \hat{K}_{ss} \mathcal{M}_{(0,T)} \hat{K}_{ss}^{\top} + \mathcal{Y}_{uu} \Big),
    \label{eq:Vu_K}\\
       \hat{K}_{ss}  & =  \hat{V}_u \hat{B}^{\top} \hat{W}^{-1} \left(\hat{F}_{ss} - \mathcal{M}_{\Delta}^{\top}\mathcal{M}_{(0,T-1)}^{-1}\right) \nonumber \\
       & \hspace{4.0cm} -   \tilde{\mathcal{Y}}_{xu}^{\top} \mathcal{M}_{(0,T-1)}^{-1}. 
         %\nonumber %\\
         % & \hspace{2.6cm} - \hat{V}_u \hat{B}^{\top} \hat{W}^{-1}  \mathcal{M}_{\Delta}^{\top}\mathcal{M}_{(0,T-1)}^{-1} . 
     \label{eq:dL_dK} 
    \end{align}
    \label{eq:grad_Lb_additional}
\end{subequations}
We note that the incorporation of control input observations to the model \eqref{eq:observations:b} makes $\partial_{K_{ss}} \mathcal{Q}_{(b)} = 0$  \textit{not} redundant, providing the additional constraint \eqref{eq:dL_dK}. 

If $B$ is known, the set of equations \eqref{eq:grad_Lb_additional} may be solved for $\hat{V}_u$ and $\hat{K}_{ss}$.
% \victor{Some details:
% \begin{itemize}
%     \item We have $\mathcal{M}_{(0,T)} \succ 0$ since for any $x$, $x^\top \mathcal{M}_{(0,T)} x = \sum_{i,t} (x^\top m_t^{(i)})(x^\top m_t^{(i)})^\top + x^\top \Sigma_t^{(i)}x > 0$. Thus, the matrix equation \eqref{eq:Vu_K} is a second-order self-adjoint equation in $\hat{K}_{ss}^\top$. As established by \cite{Crone1981}, it admits as solution if and only if 
%     \begin{align}
%         \tilde{\mathcal{Y}}_{xu}^{\top} \mathcal{M}_{(0,T)}^{-1}  \tilde{\mathcal{Y}}_{xu} - \left(\mathcal{Y}_{uu}  - \hat{V}_u\right) \succeq 0
%     \end{align}
% \end{itemize}
% }
This in turn allows to identify $\hat{A}$ through the identified closed-loop as $\hat{A} = \hat{F}_{ss} + B \hat{K}_{ss}.$ In this case, the identification of the remaining parameters, namely the pair $\{Q, R\}$, maps to an infinite-horizon inverse optimal control problem. Given a stabilizing feedback gain $\hat{K}_{ss}$, as far as we know, there is no analytical solution to the problem of finding all possible pairs $\{ \hat{Q}, \hat{R}\}$ that render $\hat{K}_{ss}$ optimal. Indeed, a manifold of possible $\{\hat{Q}, \hat{R}\}$ pairs yielding $\hat{K}_{ss}$ optimal is expected due to well-established non-uniqueness results \cite{Priess2015}. Additional criteria or constraints can be introduced in the problem to uniquely solve for $\{\hat{Q}, \hat{R}\}$ -- see, e.g. \cite{Priess2015}.

%\subsection{Kalman filter and Kalman smoother}
%\label{app:Kalman}
%\juncal{(\textbf{to do:} add eqs of the Kalman filter and smoother; use notation $\theta^{\text{old}}$)}

\bibliographystyle{unsrt}
\bibliography{main}

\end{document}